\newtheorem{thm}{Theorem}
\newtheorem{lem}{Lemma}
\newtheorem{rem}{Remark}
\numberwithin{equation}{section}
\def\Qbar{\overline{\mathbb Q}}
\def\f{{\bf f}}
\def\g{{\bf g}}
\newcommand{\lambd}{{\boldsymbol{\lambda}}}
\def\({\left(}
\def\){\right)}
\def\[{\left[}
\def\]{\right]}
\def\K{\mathbb K}
\author{Boris Adamczewski}
\address{
Univ Lyon, Universit\'e Claude Bernard Lyon 1 \\
CNRS UMR 5208, Institut Camille Jordan\\
F-69622 Villeurbanne Cedex, France.}
\email{Boris.Adamczewski@math.cnrs.fr}
\author{Tanguy Rivoal}
\address{Institut Fourier \\ CNRS et Universit\'e Grenoble Alpes  \\CS 40700 \\38058 Grenoble cedex 9, France.}
\email{tanguy.rivoal@univ-grenoble-alpes.fr}
\subjclass{11J91, 33E30, 34M05,  68W30}
\keywords{$E$-functions, Transcendental and Algebraic Values, Differential Equations, Algorithm}
\title{Exceptional values of $E$-functions at algebraic points}
\date{\today}
\thanks{This research received fundings from the European Research Council (ERC) under the European Union's Horizon 2020 research and innovation programme under the Grant Agreement No 648132, as well as from the EDATE project of the LabEx Persyval-Lab (ANR-11LABX-0025-01) funded by the French program Investissement d'avenir.}
\begin{document}

\begin{abstract} $E$-functions are entire functions with algebraic Taylor coefficients satisfying certain arithmetic conditions, and which are also solutions of  linear differential equations with coefficients in $\Qbar(z)$. They were introduced by Siegel in 1929  to generalize Diophantine properties of the exponential function, and studied further by Shidlovskii in 1956. The celebrated Siegel-Shidlovskii Theorem deals with the algebraic (in)dependence of values at algebraic points of $E$-functions solutions of a differential system. However, somewhat paradoxically, 
this deep result may fail to decide whether a given $E$-fuction assumes an algebraic or  a transcendental value at some given algebraic point. Building upon Andr\'e's theory of $E$-operators, Beukers refined in 2006 the Siegel-Shidlovskii Theorem in an optimal way. In this paper, we use Beukers' work to prove the following result: there exists an algorithm which, given a transcendental $E$-function $f(z)$ as input, outputs the finite list of all exceptional algebraic points $\alpha$ such that $f(\alpha)$ is also algebraic, together with the corresponding list of values $f(\alpha)$. This result solves the problem of  deciding whether values of $E$-functions at algebraic points are transcendental.
\end{abstract}

\maketitle

\section{Introduction}

In 1929, Siegel \cite{Siegel} wrote a landmark paper in which, amongst other important results, he introduced the notion of 
$E$-function (in a slightly more general way than below) as a generalization of the exponential function. 
Let us fix an embedding of the set of algebraic numbers $\Qbar$ into $\mathbb C$ and let us denote by 
$\mathcal O$ the ring of algebraic integers.   
A power series $f(z)=\sum_{n=0}^{\infty} \frac{a_n}{n!} z^n \in \Qbar[[z]]$ is an $E$-function if the following three conditions are fulfilled. 
\begin{enumerate}
\item[(i)] The series $f(z)$ is solution of a linear differential equation with coefficients in 
$\Qbar(z)$.
\item[(ii)] There exists $C>0$ such that for any $\sigma\in \textup{Gal}(\Qbar/\mathbb Q)$ and any $n\ge 0$,  $\vert \sigma(a_n)\vert \leq C^{n+1}$.
\item[(iii)] There exists $D>0$ and a sequence of natural numbers $d_n\neq 0$, with $\vert d_n \vert \leq D^{n+1}$, such that
$d_na_m\in \mathcal{O}$ for all~$m\le n$.
\end{enumerate}
Note that (i) implies that the $a_n$'s all lie into a certain number field $\K$. Furthermore, the function $f(z)$ is transcendental over $\mathbb C(z)$ if and only if $a_n\neq 0$ for infinitely many $n$. 

Siegel proved~\cite{Siegel} a result about the Diophantine nature of the values taken by $E$-functions at algebraic points, 
which was improved by Shidlovskii in 1956, see~\cite{shid}.

\begin{thm}[Siegel-Shidlovskii, 1956]\label{thmss}
Let $Y(z)=(f_1(z), \ldots, f_n(z))^T$ be a vector of $E$-functions such that 
$Y'(z)=A(z)Y(z)$ where $A(z)\in M_n(\Qbar(z)).$
Set $T(z)\in \Qbar[z]$ such that $T(z)A(z)\in M_n(\Qbar[z])$. 
Then for any $\alpha\in \Qbar$ such that $\alpha T(\alpha)\neq 0$, 
$$
\textup{degtr}_{\Qbar}(f_1(\alpha), \ldots, f_{n}(\alpha)) =\textup{degtr}_{\Qbar(z)}(f_1(z), \ldots, f_{n}(z)).
$$
\end{thm}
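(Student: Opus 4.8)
The plan is to prove the non-trivial inequality $\operatorname{degtr}_{\Qbar}(f_1(\alpha),\dots,f_n(\alpha))\ge\operatorname{degtr}_{\Qbar(z)}(f_1(z),\dots,f_n(z))$; the reverse inequality is elementary, since any nonzero $P\in\Qbar(z)[X_1,\dots,X_n]$ with $P(z,f_1(z),\dots,f_n(z))=0$ may be multiplied by a common denominator of its coefficients and then divided by the largest power of $z-\alpha$ dividing all of them, yielding --- as $\alpha$ is finite --- a polynomial $\widetilde P$ with $\widetilde P(\alpha,X)\not\equiv 0$ and $\widetilde P(\alpha,f_1(\alpha),\dots,f_n(\alpha))=0$. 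So the whole content is that the values are no more dependent than the functions.

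The first step reduces this to a statement about \emph{linear} independence, in the style of Shidlovskii. Fix a large integer $L$ and let $V(z)$ be the column vector whose entries are all the monomials $f_1(z)^{e_1}\cdots f_n(z)^{e_n}$ with $e_1+\cdots+e_n\le L$. By the Leibniz rule and $Y'=AY$, the vector $V$ satisfies a linear system $V'=B(z)V$ of size $\binom{n+L}{n}$ over $\Qbar(z)$, with denominators again cleared by $T$, and its entries are again $E$-functions (the class of $E$-functions, and conditions (ii)--(iii), are stable under products). The $\Qbar(z)$-span of the entries of $V$ is stable under $d/dz$; choosing a sub-vector $U$ of $V$ whose entries form a basis of that span, $U$ satisfies a linear system over $\Qbar(z)$, has $E$-function entries, and these are now linearly independent over $\Qbar(z)$. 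It therefore suffices to prove the \emph{linear form} of the theorem: $E$-functions that are linearly independent over $\Qbar(z)$ and form a solution of a linear system $U'=BU$ take, at any $\alpha$ with $\alpha T(\alpha)\ne 0$, values linearly independent over $\Qbar$. Indeed, granting this, the monomials in the $f_i(\alpha)$ indexed by $U$ are $\Qbar$-linearly independent, and letting $L\to\infty$ and comparing the growth of the dimension of the $\Qbar(z)$-span of the monomials of degree $\le L$ in the $f_i(z)$ (a Hilbert function of degree $\operatorname{degtr}_{\Qbar(z)}(f_1,\dots,f_n)$) with that of the number of $\Qbar$-linearly independent monomials of degree $\le L$ in the $f_i(\alpha)$ (governed by $\operatorname{degtr}_{\Qbar}$) yields the desired inequality.

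To prove the linear form, write the system as $Y'=AY$, $Y=(f_1,\dots,f_m)^T$, with the $f_i$ linearly independent over $\Qbar(z)$ --- equivalently, since their Taylor coefficients lie in a number field $\K$, over $\mathbb C$ --- and suppose for contradiction that $\lambda_1 f_1(\alpha)+\cdots+\lambda_m f_m(\alpha)=0$ with $\lambda_i\in\Qbar$ not all zero. For a large parameter $N$, Siegel's lemma combined with the bounds (ii)--(iii) produces polynomials $P_1,\dots,P_m\in\Qbar[z]$ of degree $\le N$, not all zero and of height $O(C^N)$, such that the remainder $R_0(z):=\sum_i P_i(z)f_i(z)$ vanishes at $z=0$ to an order $M$ of size $mN$ up to lower-order terms (there being $m(N+1)$ unknown coefficients and $M$ linear conditions, the gap $mN-M$ being what (ii)--(iii) require to keep the height under control). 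Iterating differentiation and clearing denominators with powers of $T$ yields forms $R_k(z)=\sum_i P_{k,i}(z)f_i(z)$, with $P_{k,i}$ of degree $O(N)$ and height $O(C^N)$, and with $R_k$ vanishing at $0$ to order $\ge M-k$. The crucial structural input is \emph{Shidlovskii's lemma}: because the $f_i$ are linearly independent over $\Qbar(z)$, among $R_0,\dots,R_{cN}$ --- with $c$ depending only on the system, not on $N$ --- some $m$ of them have a coefficient matrix that is invertible over $\Qbar(z)$; and since $R_0$ satisfies a linear ODE of order $\le m$ over $\Qbar(z)$ with polynomial coefficients of degree $O(N)$, hence cannot vanish to order $O(N)$ at $\alpha$ without vanishing identically, one can moreover arrange that the corresponding determinant does not vanish \emph{at $z=\alpha$}.

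Finally I would play off two estimates. Using the relation $\sum_i\lambda_i f_i(\alpha)=0$ to eliminate one of the values, together with the invertible coefficient matrix provided by Shidlovskii's lemma, one forms an $m\times m$ determinant $\Xi$ with algebraic entries --- one row equal to $(\lambda_1,\dots,\lambda_m)$, the others rows of the $P_{k,i}(\alpha)$ --- which is a \emph{nonzero} algebraic number of controlled degree and height $O(C^N)$, and which, by the vanishing of the relation and Cramer's rule, equals a $\Qbar$-linear combination (with coefficients of height $O(C^N)$) of the numbers $R_k(\alpha)$. On the analytic side, each $R_k$ with $k\le cN$ vanishes at $0$ to order $M-k$, again of size $mN$ up to lower-order terms, is entire of exponential type, and has polynomial coefficients of height $O(C^N)$; by the Schwarz lemma on a disc of radius tending to infinity, $|R_k(\alpha)|$ is bounded above by a quantity which, for $N$ large, decays faster than any fixed exponential in $N$. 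Hence $|\Xi|$ is that small, while Liouville's inequality forces $|\Xi|\ge c'C^{-N[\K(\alpha):\mathbb Q]}$ --- a contradiction for $N$ large. So no such $\Qbar$-linear relation exists, and unwinding the reduction gives the asserted equality of transcendence degrees. The step I expect to be the real obstacle is Shidlovskii's lemma: bounding from below the rank of the infinite array of iterated approximation forms with the number of required differentiations \emph{linear} in $N$, and then guaranteeing non-vanishing of the pertinent determinant at the specific point $\alpha$. By comparison, the auxiliary construction and the final size estimates are essentially bookkeeping.
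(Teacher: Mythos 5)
The paper contains no proof of this statement: it is the classical Siegel--Shidlovskii theorem, quoted with references to Siegel (1929) and Shidlovskii's book, so the only meaningful comparison is with the classical proof --- whose architecture your outline does reproduce (reduction to a linear-independence statement via monomials, Siegel's lemma auxiliary construction, iterated linear forms, Shidlovskii's rank lemma, and the determinant played off against Liouville's inequality and the vanishing-order estimate). But as a proof it has a genuine gap, which you yourself flag: Shidlovskii's lemma --- that among the first $O(N)$ iterated forms $R_k$ one can extract $m$ whose coefficient determinant is \emph{nonzero at the specific point} $\alpha$, with the number of differentiations linear in $N$ --- is the entire substance of the theorem, and your text neither proves it nor reduces it to a cited statement. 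Everything else in the classical argument really is bookkeeping, and it is precisely in this lemma that the hypothesis $\alpha T(\alpha)\neq 0$ enters, through the control of the zeros of the relevant Wronskian-type determinant by the singular set of the system; assuming it is assuming the hard part.

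Two further points would need repair even granting that lemma. First, in your reduction, replacing the monomial vector $V$ by a basis sub-vector $U$ changes the system: writing $U'$ in terms of $U$ introduces denominators beyond $T$ (zeros of the determinant of the chosen basis columns), so a point $\alpha$ with $\alpha T(\alpha)\neq 0$ need not be regular for the system satisfied by $U$, and the linear statement applied to $U$ does not directly cover all such $\alpha$; the classical deduction of the transcendence-degree statement from the linear one is organized to avoid exactly this, via a rank statement for the values of the full monomial vector. Second, in the endgame $\Xi$ is not itself a combination of the $R_k(\alpha)$: Cramer gives $\Xi\, f_i(\alpha)$ as such a combination, so you must first rule out $f_1(\alpha)=\cdots=f_m(\alpha)=0$, which follows from uniqueness of solutions of $Y'=AY$ at the regular point $\alpha$ since the $f_i$ are not all identically zero. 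With these repairs and a proof (or precise citation) of Shidlovskii's lemma, your outline coincides with the standard proof as presented in Shidlovskii's monograph; as written, it is a correct road map rather than a proof.
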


\medskip

Choosing $f(z)$ to be the exponential function, one  immediately deduces the famous Hermite-Lindemann Theorem:  
the number $e^{\alpha}$ is transcendental for all non-zero algebraic number $\alpha$. 
It is thus tempting to believe that $E$-functions should take transcendental values at non-zero algebraic points. In some sense, this is the case 
but there may be a finite number of exceptions, as illustrated by the transcendental $E$-function $\mathfrak f(z):=(z-1)e^z$ 
which vanishes at $z=1$. The reason for this exceptional behaviour is that the point $z=1$ is a \emph{singular point} with respect to the underlying 
differential system
$
\mathfrak f'(z)=(\frac{z}{z-1})\mathfrak f(z)\,,
$
that is a point such that $\alpha T(\alpha)=0$ in Theorem \ref{thmss}.~\footnote{We slightly abuse the usual terminology by considering 
that zero is always a singular point, even when it is not a pole of the matrix $A(z)$. Indeed,  
any $E$-function takes an algebraic value at zero, which makes this point a singular one from our Diophantine perspective.}  It can be shown than the latter implies 
the following simple dichotomy:  a transcendental  $E$-function solution to a differential equation 
of order one (possibly inhomogeneous) takes algebraic values~\footnote{In fact, such an $E$-function necessarily vanishes at all non-zero singular points in the homogeneous case.} 
at all singular points and transcendental values at all other algebraic points. 
However, as powerful as it is, the Siegel-Shidlovskii theorem does not completely solves the question of the 
algebraicity/transcendence for the values at algebraic points of $E$-functions satisfying higher order equations.  
There are two reasons for that. First, in the case of a differential equation of order at least two,  
 the mere transcendence of the function $f_1(z)$ does not ensure that 
the number $f_1(\alpha)$ is transcendental but only thatat least  one amongst the 
 numbers $f_1(\alpha), \ldots, f_n(\alpha)$ is transcendental, assuming furthermore that $\alpha$ is a \emph{regular point}, i.e ~a point which is not singular. 
 The second difficulty arises precisely from the fact that the Siegel-Shidlovskii Theorem does not apply at  
 singular points.

The aim of this paper is to overcome these deficiencies by proving the following result.

\begin{thm} \label{thmmain} There exists an algorithm to perform the following tasks.  
Given an $E$-function $f(z)$ as input, it first says whether $f(z)$ is transcendental or not. If so, it  outputs  the finite list of algebraic numbers $\alpha$ such that $f(\alpha)$ is algebraic, together with the corresponding list of values $f(\alpha)$. 
\end{thm}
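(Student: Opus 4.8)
The plan is to reduce everything to a finite computation governed by Beukers' refinement of the Siegel–Shidlovskii theorem. First I would address the transcendence test: given $f(z)$, one computes (by the standard algorithm, e.g. using the differential equation it satisfies and guessing/verifying a linear ODE it annihilates) a linear differential operator $L\in\Qbar(z)[\frac{\mathrm d}{\mathrm d z}]$ with $Lf=0$. Since $f$ is transcendental over $\mathbb C(z)$ if and only if infinitely many $a_n$ vanish, and since an $E$-function is algebraic if and only if it is a polynomial (this is a known consequence of the arithmetic growth conditions together with the fact that algebraic functions have bounded-denominator, geometric-growth Taylor coefficients, forcing the $a_n/n!$ to be eventually $0$), the test amounts to deciding whether $f$ is a polynomial; from a bound on its degree in terms of $L$, this is effective. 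So from now on assume $f$ is transcendental.

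Next I would set up the $E$-operator/minimal differential system. Embed $f$ as the first coordinate of a vector $Y=(f_1,\dots,f_n)^T$ of $E$-functions with $Y'=AY$, $A\in M_n(\Qbar(z))$, obtained from a cyclic-vector presentation of $L$ (or of the minimal operator annihilating $f$); this is an effective construction. Let $T(z)\in\Qbar[z]$ clear denominators of $A$, so the singular points are the finitely many zeros of $zT(z)$, all of which are computable algebraic numbers. Away from these points, Beukers' theorem (the optimal refinement) tells us exactly when $f_1(\alpha)=f(\alpha)$ is algebraic at a regular algebraic point $\alpha$: namely it characterizes the algebraicity in terms of the $\Qbar$-linear relations among $f_1(z),\dots,f_n(z)$ evaluated in a way that is detected by the local monodromy / the structure of $L$ as an $E$-operator. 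The key point is that, by Beukers, the set of regular $\alpha$ at which $f(\alpha)$ is algebraic is either empty or is cut out by a single nonzero polynomial (or, more precisely, is the zero set of a computable finite system), hence finite and computable; and for each such $\alpha$ the value $f(\alpha)$ is then an explicitly computable algebraic number (it lies in a controlled number field and can be pinned down to sufficient precision, then recognized). The singular points are finitely many and handled individually: at each singular $\alpha$ one analyses the local expansion of $f$ (and of the companion system) using Andr\'e's theory of $E$-operators — the local solutions at a singularity of an $E$-operator involve the relevant Nilsson-type expansions, and one extracts whether the particular linear combination giving $f$ has an algebraic ``value'' there, again a finite effective check.

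Assembling these pieces gives the algorithm: test transcendence; if transcendental, build the minimal system and $T(z)$; compute the finite singular set; run the Beukers criterion over the regular locus to get the finite exceptional regular points and their values; handle each singular point by local analysis; output the concatenated finite list of pairs $(\alpha,f(\alpha))$.

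The main obstacle I expect is making Beukers' criterion genuinely \emph{effective} and \emph{uniform}: his theorem characterizes algebraicity of $f(\alpha)$ qualitatively, but turning ``$f(\alpha)$ is algebraic iff some linear-algebra/monodromy condition holds'' into ``the exceptional $\alpha$ are the roots of this explicitly computable polynomial, and here is the algorithm computing $f(\alpha)$'' requires (a) an effective handle on the $\Qbar(z)$-linear relations among the coordinate functions and on the Galois/differential-Galois data attached to $L$, (b) controlling the number field in which the exceptional values live, and (c) a certified numerical procedure (with an a priori height/precision bound) to recognize the algebraic value once a rational candidate relation is produced. The singular-point analysis via Andr\'e's $E$-operators is technically delicate but finite; the real work is the effectivity of the regular-point dichotomy, which is where the bulk of the paper's argument will go.
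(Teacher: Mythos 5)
Your overall plan (use Beukers' refinement to confine the candidate exceptional points to a computable finite set attached to the singularities, then decide each candidate) is the same as the paper's, but the two mechanisms that actually make this work are missing, and the substitutes you sketch would fail. At regular points, the paper does not need any ``Beukers criterion cut out by a polynomial over the regular locus.'' Its device is to compute, effectively, first the \emph{minimal homogeneous} equation for $f$ (via Grigoriev's bound on degrees of right factors and the Bertrand--Beukers multiplicity estimate with the explicit constants of Bertrand--Chirskii--Yebbou, reducing everything to finite linear algebra on Taylor coefficients) and then the \emph{minimal inhomogeneous} equation, of order $s$, by searching for rational solutions of an auxiliary differential system with Barkatou's algorithm. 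Minimality guarantees that $1,f,\dots,f^{(s-1)}$ are $\Qbar(z)$-linearly independent, so Beukers' Corollary~1.4 applied to the associated system gives directly that $1,f(\alpha),\dots,f^{(s-1)}(\alpha)$ are $\Qbar$-linearly independent, hence $f(\alpha)\notin\Qbar$, at every nonzero $\alpha$ which is not a root of the leading polynomial $u_0$; the nonzero exceptional candidates are exactly the roots of $u_0$. With your companion system built from a possibly non-minimal operator $L$ (and without adjoining $1$ and minimizing the inhomogeneous relation), the $\Qbar(z)$-linear relations among the coordinates are uncontrolled and no such conclusion follows; your claim that the exceptional regular points are ``cut out by a single nonzero polynomial'' is not what Beukers' theorem gives, and incidentally the correct statement is that this set is \emph{empty}. (Also, your transcendence criterion is stated backwards: $f$ is transcendental iff infinitely many $a_n$ are \emph{nonzero}; the paper reads transcendence off $s\ge 1$, which it needs anyway.)

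The more serious gap is at the singular points, i.e.\ the roots $\alpha$ of $u_0$. Your proposal to decide whether $f(\alpha)\in\Qbar$ by local Nilsson-type expansions cannot work: $f$ is entire, so its local expansion at $\alpha$ is just a Taylor series whose constant term is precisely the number $f(\alpha)$ whose nature is in question, and no finite local computation on the operator decides it. The paper's third example, $f(z)=z^ae^{az}+z^be^{bz}$ with $u_0(z)=z^2(1+z)$, shows the point: $-1$ is a root of $u_0$ yet $f(-1)$ is transcendental (Lindemann--Weierstrass), so a genuinely global input is required. The paper's mechanism is Beukers' Theorem~1.5 together with his effective singularity-removal procedure: one computes an invertible matrix $\mathcal M(z)$ over $\Qbar[z]$ with $(1,f,\dots,f^{(s-1)})^T=\mathcal M(z)\,(e_0,\dots,e_s)^T$, where the $e_j$ are $E$-functions satisfying a system with coefficients in $\Qbar[z,1/z]$; Corollary~1.4 then makes $e_0(\alpha),\dots,e_s(\alpha)$ linearly independent over $\Qbar$ for every $\alpha\in\Qbar^*$, and one obtains the decidable criterion that $f(\alpha)\in\Qbar$ iff some vector $(\beta,1,0,\dots,0)$ lies in the cokernel of $\mathcal M(\alpha)$, which simultaneously outputs the algebraic value. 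This desingularization-plus-cokernel step is the heart of the algorithm (and replaces your certified-numerics recognition of $f(\alpha)$); without it, your scheme has no way to decide the singular candidates.
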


From now on, we shall call {\em exceptional} any algebraic number, $0$ included,  where a given $E$-function takes an algebraic value. 
We shall deduce our result from the work of Beukers~\cite{beukers}, where he derived from Andr\'e's theory of $E$-operators~\cite{andre} the following refinement of the 
Siegel-Shidlovskii Theorem.    
 
 \begin{thm}[Beukers, 2006]\label{thm:ns} Under the same assumptions as in Theorem \ref{thmss},  
for any homogeneous polynomial $P\in \Qbar[X_1,\ldots, X_n]$ such that 
$P(f_1( \alpha),\ldots, f_n( \alpha))=0$, there exists a polynomial $Q\in \Qbar[Z, X_1, \ldots, X_n]$, 
homogeneous in the variables $X_1,\ldots,X_n$, such that $Q(\alpha, X_1,\ldots, X_n)=P(X_1,\ldots, X_n)$ 
and
$
Q(z,f_1(z),\ldots, f_n(z))=0.
$
\end{thm}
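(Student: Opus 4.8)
The plan is to recast the statement as a \emph{surjectivity} property of a specialization map, and then to prove it by combining Theorem~\ref{thmss} with André's structural theorem for $E$-operators. Write $Y=(f_1,\dots,f_n)^T$ and fix $\alpha\in\Qbar$. Let $\mathcal I_\alpha\subseteq\Qbar[X_1,\dots,X_n]$ be the homogeneous ideal of all $P$ with $P(Y(\alpha))=0$, and let $\mathcal J\subseteq\Qbar[Z,X_1,\dots,X_n]$ be the ideal, homogeneous in $X_1,\dots,X_n$, of all $Q$ with $Q(z,Y(z))=0$. Specialization $Q\mapsto Q(\alpha,X_1,\dots,X_n)$ sends $\mathcal J$ into $\mathcal I_\alpha$, and the conclusion of the theorem is exactly that this map is \emph{surjective} onto $\mathcal I_\alpha$. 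Geometrically, if $\mathcal V\subseteq\mathbb A^1\times\mathbb P^{n-1}$ denotes the Zariski closure of the graph $z\mapsto[Y(z)]$, surjectivity amounts to saying that $[Y(\alpha)]$ is a \emph{generic point} of the scheme-theoretic fibre $\mathcal V_\alpha$, i.e. it satisfies no homogeneous relation beyond those cutting out the whole fibre. So I would first reduce the theorem to this genericity statement.

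The obstruction to genericity is that $\alpha$ may be a singular point of the system $Y'=AY$, where the fibre $\mathcal V_\alpha$ can acquire extra components through $[Y(\alpha)]$ or become non-reduced. Here I would invoke André's theorem: the minimal operator of any $E$-function is an $E$-operator whose only singularities on $\mathbb P^1$ are $0$ and $\infty$, with a regular singularity at $0$ having rational exponents. Applying this to $f_1,\dots,f_n$, the differential module $N$ they generate inside $(\Qbar(z)^n,\,d-A)$ has all its singular points contained in $\{0,\infty\}$, so $N$ is an ordinary (regular) connection at every point of $\Qbar\setminus\{0\}$. In particular a point $\alpha\neq0$ with $T(\alpha)=0$, singular for the presentation $Y'=AY$, is only an \emph{apparent} singularity for $N$: near such $\alpha$ one has a full basis of single-valued holomorphic solutions and trivial local monodromy. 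This is precisely the feature that lets the refinement reach the singular points excluded from Theorem~\ref{thmss}, and it reduces the problem to the case where $\alpha\neq0$ is an ordinary point. The genuinely regular-singular point $\alpha=0$ I would treat separately, using the Frobenius local basis furnished by the rational exponents at $0$.

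At an ordinary point $\alpha\neq0$ the family $\mathcal V$ is well behaved: the fibre dimension is locally constant, so $\dim\mathcal V_\alpha$ equals the generic fibre dimension $\textup{degtr}_{\Qbar(z)}(f_1(z),\dots,f_n(z))$. On the other hand, Theorem~\ref{thmss} gives
\[
\textup{degtr}_{\Qbar}(f_1(\alpha),\dots,f_n(\alpha))=\textup{degtr}_{\Qbar(z)}(f_1(z),\dots,f_n(z)),
\]
so the variety cut out by $\mathcal I_\alpha$ has the same dimension as $\mathcal V_\alpha$. Since the specialized ideal is always contained in $\mathcal I_\alpha$, and since at the ordinary point $\alpha$ the monodromy (equivalently the differential Galois) action identifies the component of $\mathcal V_\alpha$ through $[Y(\alpha)]$ with the orbit closure governing the generic fibre, one gets that this component is irreducible and that $[Y(\alpha)]$ is its generic point; combined with the equality of dimensions and the reducedness of the fibre over an ordinary point, this forces $\mathcal I_\alpha$ to coincide with the specialization of $\mathcal J$. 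By the first paragraph this yields, for every homogeneous $P$ vanishing at $Y(\alpha)$, a lift $Q$ homogeneous in $X_1,\dots,X_n$ with $Q(\alpha,X_1,\dots,X_n)=P$ and $Q(z,Y(z))=0$.

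The hard part is the passage in the previous paragraph from the equality of transcendence degrees — all that Theorem~\ref{thmss} provides directly — to the ideal-theoretic conclusion, namely genericity of the specialized point together with reducedness of the fibre. This is exactly where André's theorem is indispensable: without the absence of finite singularities off $\{0,\infty\}$, the fibre $\mathcal V_\alpha$ at a point with $T(\alpha)=0$ could be non-reduced or could contain $[Y(\alpha)]$ in a proper component, so that the specialization of $\mathcal J$ would fail to be surjective. A secondary difficulty is the genuinely singular point $\alpha=0$, which must be analyzed through the regular-singular local structure rather than the ordinary-point argument, together with the bookkeeping that guarantees the lifted $Q$ is homogeneous in $X_1,\dots,X_n$ and satisfies $Q(\alpha,X_1,\dots,X_n)=P$ exactly.
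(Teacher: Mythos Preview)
The paper does not prove this theorem; it is stated as Beukers' result and cited from~\cite{beukers}, then used as a black box (via \cite[Corollary~1.4 and Theorem~1.5]{beukers}) in Section~\ref{sec:step4}. So there is no ``paper's own proof'' to compare against, only the original arguments of Beukers~\cite{beukers} and the later one of Andr\'e~\cite{andreens}.

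Your sketch is closer in spirit to Andr\'e's approach in~\cite{andreens} (specialization of the ideal of functional relations, differential Galois/mono\-dromy orbit closures) than to Beukers' original argument, which is constructive: Beukers proves a desingularization lemma (his Theorem~1.5, used in this paper) producing an invertible polynomial matrix $\mathcal M(z)$ sending $Y$ to a new $E$-function vector whose system has no finite singularity except~$0$, and then applies the Siegel--Shidlovskii machinery directly to that new system.

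Two concrete problems with your outline. First, a confusion about the hypotheses: Theorem~\ref{thm:ns} carries the \emph{same} assumption $\alpha T(\alpha)\neq0$ as Theorem~\ref{thmss}, so $\alpha=0$ and the points with $T(\alpha)=0$ are already excluded. Your paragraphs about treating $\alpha=0$ via the Frobenius basis, and about ``reaching the singular points excluded from Theorem~\ref{thmss}'', aim at a statement stronger than the one you are asked to prove; the refinement over Siegel--Shidlovskii is the \emph{lifting of relations}, not an extension to singular $\alpha$. Second, the heart of the argument is left as a gesture: from the equality of transcendence degrees you want to conclude that the specialization of $\mathcal J$ equals $\mathcal I_\alpha$, and you invoke ``reducedness of the fibre over an ordinary point'' and a monodromy identification of components. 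Neither is justified. Equality of dimensions of $V(\mathcal I_\alpha)$ and $\mathcal V_\alpha$ does not by itself give equality of the ideals; one needs to control the scheme structure of $\mathcal V_\alpha$ and to know that $[Y(\alpha)]$ lies on a single, reduced component. This is exactly the nontrivial content that Andr\'e supplies via his differential Galois correspondence for solution algebras, and that Beukers supplies by the explicit change of basis $\mathcal M(z)$; your sketch names the difficulty but does not resolve it.
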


A similar but weaker result, in which the assumption on $\alpha$ is replaced by $\alpha\in \Qbar\setminus S$  
where $S$ is an unspecified finite set, was first proved by Nesterenko and Shidlovskii~\cite{ns} in 1996. 
Another proof of Beukers' Theorem was found later by Andr\'e~\cite{andreens}, more in the spirit of the proof of Nesterenko and Shidlovskii.   
Let us mention two consequences of Beukers' lifting  results. 
The first one is explicitly stated in~\cite{ateo} but its proof is essentially due to the referee of~\cite{firi} (where it is given in a less general case): {\em 
Let $f(z)$ be an $E$-function with Taylor coefficients all in a number field $\mathbb K$. Then for any 
$\alpha \in \Qbar$, either $f(\alpha)\notin \Qbar$ or $f(\alpha)\in \mathbb K(\alpha)$.} 
The second consequence follows from \cite[Proposition 4.1]{beukers}: {\em Let $f(z)$ be a transcendental $E$-function and let $\{\alpha_1, \ldots, \alpha_s\}$ the set of exceptional non-zero algebraic numbers for it. If $s\ge 1$,  there exist some 
integers $m_1, \ldots, m_s \ge 1$, a polynomial $p\in \Qbar[z]$ of degree $\le m_1+\cdots +m_s-1$ and an $E$-function $g(z)$ transcendental over $\Qbar(z)$ such that
$$
f(z)=p(z) + \Big(\prod_{j=1}^s (z-\alpha_j)^{m_j}\Big) g(z)
$$
and for all $\alpha\in \Qbar^*$, $g(\alpha)\notin \Qbar$.}

\medskip

Finally, we mention that analogues of all the above mentioned theorems, Theorem~\ref{thmmain} included,  have been recently proved in the setting of linear Mahler equations (see~\cite{af1, af2, pph} for statements and references). On the other hand, such results are far from being true for $G$-functions, also defined and studied by Siegel~\cite{Siegel}; see the introduction of \cite{gfndio} for an historical survey.

\medskip

The proof of Theorem~\ref{thmmain} is decomposed in four steps. In Step~1, we discuss how the function $f$ is given to us as initial input of the algorithm and how to \emph{determine} an algebraic number.  
In Step~2, the algorithm computes a minimal differential equation over $\Qbar(z)$ annihilating $f$, in fact over $\K[z]$ where $\K$ is the number field generated over $\mathbb Q$ by the Taylor coefficients of $f$. In Step~3, it computes a minimal inhomogeneous differential equation over $\K[z]$  annihilating $f$, and it determines if $f$ is transcendental. If so, let $u_0$ denote the leading polynomial of this (normalized) equation: we are then ensured by Beukers' Theorem that the exceptional non-zero $\alpha$'s all lie amongst the roots of $u_0$. 
Then, in Step~4 based on the Andr\'e-Beukers theory,the algorithm determines which roots $\alpha$ of $u_0$ are indeed such that $f(\alpha)\in \Qbar$. 
We stress that there are most $\deg(u_0)$ exceptional non-zero $\alpha$'s. Furthermore, the degree and height of  $u_0$ can  be effectively bounded a priori in terms of $L$ and $\K$. The degree and height of the corresponding $f(\alpha)$, which is in fact in $\K(\alpha)$, can also be bounded a priori in terms of $L$ and $\K$. We do not provide such explicit bounds because they depend on various huge explicit bounds 
in the literature which are already far from optimal, and thus more of theoretical than of practical interest. 
We then make some comments about effectivity. In the final section, we first illustrate our strategy with three examples. Our third example provides in particular a situation  
where $f(\alpha)$ can be transcendental even if $u_0(\alpha)=0$. In fact, this should be the typical situation. 
Thus our algorithm cannot return its output right after Step 3, and Step 4 must be performed. 

\medskip

\noindent {\bf Acknowledgments.} We very warmly thank Daniel Bertrand, Alin Bostan, Dmitri Grigoriev, Patrice Philippon, Julien Roques and Jacques-Arthur Weil  
for numerous discussions on various aspects of this project.

\section{Step 1: Comments on Theorem \ref{thmmain}}\label{sec:s1}

In this section, we first clarify  the meaning  of the expression 
\emph{Given an $E$-function $f(z)$} in Theorem \ref{thmmain}.  We also  
precise in which form the exceptional algebraic numbers (and the corresponding values taken by $f$) 
are given by our algorithm.

\subsection{How to give an $E$-function?} 

Let us write $f(z):=\sum_{n=0}^\infty \frac{a_n}{n!}z^n$. To say that $f(z)$ is an $E$-function implies that it satisfies a linear differential equation with polynomial coefficients whose coefficients are algebraic numbers, or equivalently, that the sequence $(a_n)_{n\ge 0}$ satisfies a linear recurrence with polynomial coefficients with algebraic coefficients. In order to be able to uniquely determine $f(z)$ from the knowledge of such a differential equation or such a linear recurrence, one should also know the values of $a_0,a_1,\ldots,a_m$ for a sufficiently large positive integer $m$.~\footnote{One may need more terms than the order of the recurrence. For instance, the recurrence $(n-1)a_{n}=a_{n-1}$ does not enable to compute $a_1$, whatever value is given to $a_0$; we need to be given $a_0$ and $a_1$ as initial conditions. More generally, the recurrence $\sum_{n=0}^d p_j(n)a_{n-j}=0$   is readily computed from the differential equation (see~\cite[p. 504]{bvs} or \cite[proof of Lemma 2]{gfndio} for formulas): we take  $m=\max(d,g+1)$  where $g$ is the largest positive integer root of $p_0(n)$, and $m=d$ if there is no such root.   Incidentally,  $p_0$ is the indicial polynomial 
at~$0$ of the differential equation.} 
Unfortunately, there is no known algorithm so far to check from the recurrence whether  the sequence 
$(a_n)_{n\ge 0}$ does satisfy or not  the arithmetical properties (ii) and (iii) which are requested in the definition of an $E$-function. This is similar to the fact that given a linear differential operator  in $\Qbar(z)[\frac{d}{dz}]$, there is no known algorithm  to determine whether it has an $E$-function for solution, except in some 
very specific cases.  
Furthermore, it may happen that no explicit formula for the $a_n$'s is known. For these reasons, the expression \emph{given an $E$-function $f(z)$}, as in Theorem \ref{thmmain}, will mean in this paper that:
\begin{enumerate}
\item[(i)] One knows explicitly a linear differential operator $L\in \Qbar(z)[\frac{d}{dz}]$ that annihilates $f(z)$.
\item[(ii)] One knows enough coefficients of the Taylor expansion of $f(z)$ to be able to uniquely determine $f(z)$ from the knowledge of $L$ and thus to be able to compute from $L$ as many Taylor coefficients of 
$f(z)$ as needed. (See the footnote.)
\item[(iii)] An oracle guarantees that $f(z)$ is an $E$-function. 
\end{enumerate}

\medskip

Of course, when considering an $E$-function  in practice, one often knows an explicit formula for the 
coefficients $a_n$, of hypergeometric type or involving multiple sums of multinomials divided 
by a factorial for instance. This formula should show that the sequence $a_n$ satisfies the requested properties 
(ii) and (iii) of the definition of $E$-function. Moreover, to check the differential assumption (i) for $f(z)$, 
we can try to use Zeilberger's algorithm~\cite[Chapter 7]{koepf} or its generalization to multiple 
hypergeometric sums by Weigschaider~\cite{weig}: if successfull, this provides a differential operator 
$L\in \Qbar(z)[\frac{d}{dz}]$ such that $Lf(z)=0$, but which is not necessarily minimal for the degree 
in $\frac{d}{dz}$. In theory, this approach has the defect to work only for $E$-functions with Taylor 
coefficients of multiple hypergeometric type. Again, in practice, all known examples of $E$-functions 
turn out to be of this form. In fact, Siegel~\cite{sieg} asked whether any $E$-functions 
is a linear combination of product of confluent hypergeometric series; see also~\cite[p. 184]{shid}. 
In~\cite{rivroq}, building upon certain computations done by Katz in~\cite{katz}, it is proved that 
$E$-functions of order $1$ or $2$ can be expressed with Kummer confluent hypergeometric functions  
${}_1F_1[a;b;z]$. This  answers Siegel's question in the affirmative for $E$-functions of order at most $2$, 
but the higher order cases are still open. Another possibility, that belongs to the folklore, is that any 
$E$-function could be obtained as ``specialization'' of multivariate GKZ hypergeometric series.

\subsection{How to determine an algebraic number?}

The situation is similar to the previous one.  
We say that a complex algebraic number  $\beta$ is {\em determined} if one is able to provide 
the following.    
\begin{itemize}
\item [(i)] An explicit non-zero polynomial $A\in \mathbb Q[z]$ such that $A(\beta)=0$;  
in particular, this provides explicit bounds on the degree of $\beta$ over $\mathbb Q$ and its height.
\item [(ii)] A numerical approximation of $\beta$ sufficiently accurate to be able to distinguish 
 $\beta$ from all the other roots of $A(z)$.
\end{itemize}

\section{Step 2: Finding the minimal homogeneous differential equation for $f(z)$}\label{sec:s2}

We describe here an algorithm allowing to find a  non-zero minimal homogeneous linear differential equation of a 
power series $f(z)$ solution of a given homogeneous linear differential equation with coefficients in $\Qbar$ (embedded into $\mathbb C$). Minimality is defined up to a non-zero polynomial factor; from now on, we make the slight abuse of language to write ``the'' instead of ``a''. Given a differential operator $L \in \Qbar(z)[\frac{d}{dz}]$, the degree of $L$ in $\frac{d}{dz}$ is its order; its degree in $z$ is the maximum degree amongst all the numerators and denominators of the coefficients of $L$. 

In order to find a minimal operator from $L$, we assume that: 

\begin{itemize}

\item[(i)] One knows an explicit differential operator $L\in \Qbar(z)[\frac{d}{dz}]$ annihilating $f$, say 
of order $r_0$ and of degree $\delta_0$. 

\medskip

\item[(ii)] One knows enough Taylor coefficients of $f$ in order to determine it uniquely from the 
knowledge of $L$. 
\end{itemize}

Under these assumptions, the knowledge of $L$ enables one to compute as many Taylor coefficients 
as wanted.  Let us denote by $L_{min}\in \Qbar(z)[\frac{d}{dz}]$ the minimal operator annihilating $f$. 
By minimality, $L_{min}$ is a right factor of $L$.  By \cite[Theorem 1.2]{Gr90} of Grigoriev, 
it thus follows that 
$$
\deg(L_{min})\leq \delta_1 \,,
$$
where $\delta_1$ is explicit and depends on $r_0,\delta_0$.~\footnote{We use Grigoriev's notations \cite{Gr90} in this footnote. He 
showed that 
$\delta_1=\exp(M(d_1d_2 2^n)^{o(2^n)})$ is suitable, where the quantities $M, d, d_1, n$ can be explicitely computed from the knowledge of our operator $L$. Because of the exponent $o(2^n)$, the bound might seem ineffective. In fact, his proof shows that $o(2^n)$ can be replaced 
by $\binom{n}{[n/2]}$, which Grigoriev confirmed to us~\cite{grigpriv}. Hence Theorem 1.2 of \cite{Gr90} is completely explicit. For other methods to compute such a bound, see~\cite[Sec. 9]{hoeij}.} Of course, $L_{min}$ is of order $r_1\leq r_0$.  
Let us now describe an algorithm to find $L_{min}$.

\medskip

Let $1\leq r\leq r_0$ and $0\leq \delta\leq \delta_1$. Let us assume that there exist some  polynomials 
$P_0(z),\ldots,P_r(z)$ not all zero and of degrees at most $\delta$, such that 
$$
R(z):=P_0(z)f(z) +\cdots + P_r(z)f^{(r)}(z).
$$
By the multiplicity estimate of Bertrand and Beukers \cite[Theorem 1]{BB}, one has the following alternative: 
$$
\mbox{either } \quad  R \equiv 0 \quad \mbox{or } \quad \textup{ord}_{z=0}R(z)\leq (\delta+c_1)r_0+c_2r_0^2.
$$
In \cite{BCY}, the constants $c_1$ and $c_2$ 
are made explicit, and they both depend on $\delta$ and $r_0$. One can thus find an explicit natural number 
$N$ such that 
$$
R\equiv 0 \iff \mbox{ord}_{z=0}R(z) \geq N.
$$
Lemma \ref{lem: toplitz} below then provides an algorithm to decide whether there exist some  polynomials 
$P_0(z),\ldots,P_r(z)$ not all zero and of degrees at most $\delta$, such that 
$$
P_0(z)f(z) +P_1(z)f'(z)+\cdots + P_r(z)f^{(r)}(z)=0.
$$
Then one can check, for all $1\leq r\leq r_0$ and $0\leq \delta\leq \delta_1$, 
whether there exists a differential operator $L_{r,\delta}$ of order at most $r$ and degree 
at most $\delta$ annihilating $f$. The smallest $r$ with such a property will provide $L_{min}$, as wanted.

\begin{lem}\label{lem: toplitz} 
Let $\delta$ and $N$ be two non-negative integers. 
Let 
$$
g_0(z):=\sum_{n= 0}^{\infty}a_0(n)z^n, \ldots, g_r(z):=\sum_{n= 0}^{\infty}a_r(n)z^n
$$ 
be explicitly given~\footnote{ In the sense that one can compute explicitly as many of their Taylor coefficients as needed. In the lemma, one needs to know $a_j(n)$, $j=0,\ldots, r$, $n=0, \ldots, N.$} power series in $\overline{\mathbb Q}[[z]]$.  There exists an algorithm to determine whether 
there exist some polynomials 
$P_0(z),\ldots,P_r(z)$ not all zero and of degree at most $\delta$, such that the power series 
$$
P_0(z)g_0(z) +\cdots + P_r(z)g_r(z)
$$
has order at least $N$. 
\end{lem}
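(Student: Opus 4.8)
The plan is to reduce the analytic condition ``order at least $N$'' to a finite homogeneous linear system over a number field, and then invoke effective linear algebra.

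Write $P_j(z)=\sum_{k=0}^{\delta}p_{j,k}z^k$ for $0\le j\le r$, where the $p_{j,k}$ are $(r+1)(\delta+1)$ unknowns. For every integer $n$ with $0\le n\le N-1$, the coefficient of $z^n$ in $\sum_{j=0}^{r}P_j(z)g_j(z)$ equals
$$
c_n:=\sum_{j=0}^{r}\ \sum_{k=0}^{\min(n,\delta)}p_{j,k}\,a_j(n-k),
$$
which is a $\Qbar$-linear form in the unknowns $p_{j,k}$ whose coefficients are taken among the Taylor coefficients $a_j(m)$, $0\le j\le r$, $0\le m\le N-1$ (finitely many of them). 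By definition, the power series $\sum_{j=0}^{r}P_j(z)g_j(z)$ has order at least $N$ if and only if $c_0=c_1=\cdots=c_{N-1}=0$. Hence polynomials $P_0,\ldots,P_r$, not all zero and of degree at most $\delta$, with the required property exist if and only if the homogeneous linear system $(c_n=0)_{0\le n\le N-1}$ in the unknowns $p_{j,k}$ admits a non-zero solution; equivalently, if and only if the rank of its coefficient matrix is strictly less than $(r+1)(\delta+1)$. That matrix has $N$ rows (indexed by $n$) and is block-Toeplitz: the block attached to $g_j$ is the $N\times(\delta+1)$ lower-triangular Toeplitz matrix with $(n,k)$-entry $a_j(n-k)$, interpreted as $0$ when $k>n$ (whence the name of the lemma).

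It remains to observe that this rank can be computed effectively. Since the $g_j$ are given explicitly, one can compute the coefficients $a_j(m)$ with $0\le j\le r$ and $0\le m\le N-1$ as determined algebraic numbers; they all lie in a number field $\mathbb{L}$ that can be effectively determined, together with the representation of each $a_j(m)$ as an element of $\mathbb{L}$, and Gaussian elimination over $\mathbb{L}$ is effective. Computing the rank of the above matrix over $\mathbb{L}$ and comparing it with $(r+1)(\delta+1)$ therefore decides the question. (If a non-zero solution exists, solving the system over $\mathbb{L}$ even exhibits explicit admissible polynomials $P_0,\ldots,P_r$, although the statement only asserts their existence.)

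There is no genuine obstacle here: the whole content of the lemma is the remark that ``$\textup{ord}_{z=0}\bigl(\sum_j P_jg_j\bigr)\ge N$'' amounts to the vanishing of $N$ quantities $c_n$, each depending linearly on the $p_{j,k}$ and only on $a_j(0),\ldots,a_j(N-1)$, so that the problem becomes one of finite-dimensional linear algebra. The one point requiring a little care is exactness of the computations, which is guaranteed by our convention that an explicitly given power series comes with an algorithm producing its coefficients as determined algebraic numbers, so that all the linear algebra above takes place over an explicitly known number field.
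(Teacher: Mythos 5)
Your proposal is correct and follows essentially the same route as the paper: both reduce the condition ``order at least $N$'' to a finite homogeneous linear system in the $(r+1)(\delta+1)$ unknown coefficients of the $P_j$, whose matrix is the block-Toeplitz matrix built from the $a_j(m)$, and then decide solvability by exact linear algebra over a number field (the paper states this as the existence of a non-zero vector in the cokernel of its matrix $\mathcal{S}_N(\mathbf{g})$, you as a rank condition — the same thing).
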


\begin{proof}
Set $\mathbf{g}(z):=(g_0(z), \ldots, g_r(z))^{T}$ and 
$$
\mathbf{g}(z):=\sum_{i=0}^\infty \mathbf{g}_i z^i \,
$$
the power series expansion of $\mathbf{g}(z)$. 
Associated with the power series $\mathbf{g}(z)$, one defines the following 
$(r+1)(\delta+1)\times (N+1)$ matrix: 
\begin{equation*}
{\mathcal S_N}({\bf g}) := \left(\begin{array}{cccccc} \g_0 & \g_1 & \cdots & \g_{\delta} 
& \cdots & \g_N \\ {\mathbf 0} & \g_0 & \ddots & \ddots & \ddots & \g_{N-1} \\ 
\vdots & \ddots & &&& \vdots \\ {\mathbf 0} & \cdots &  {\mathbf 0} & \g_0 & \cdots & \g_{N-\delta} \end{array}\right) \, .
\end{equation*} 
The form of the matrix  ${\mathcal S}_N(\g)$ is reminiscent to {\it Toeplitz} matrices. 
We define the left null space, or cokernel, of 
${\mathcal S}_N(\g)$ by:
$$
{\rm coker }({\mathcal S}_N(\g)) := 
\Big\{  \lambd \in \Qbar^{(r+1)(\delta+1)} \,\big\vert \, \lambd^T \, {\mathcal S}_N(\g) = 0^T\Big\}.
$$
Then there exist some polynomials 
$P_0(z),\ldots,P_r(z)$ not all zero and of degree at most $\delta$, such that 
$$
P_0(z)g_0(z) +\cdots + P_r(z)g_r(z)
$$
has order at least $N$ if, and only if, there exists a non-zero vector 
$\lambd^T$ in ${\rm coker} ({\mathcal S}_N(\g))$. This can be algorithmically determined as it is equivalent to determine whether some linear system as a non-trivial solution or not. 
\end{proof}

\begin{rem}\emph{ Minimality of a differential equation for a given $E$-function 
can be verified by various means, especially if it is of small order, and not necessarily by the very general procedure described in this section which can be rather lengthy.}
\end{rem}

\section{Step 3: Finding the minimal inhomogeneous differential equation for $f(z)$}\label{sec:s3}

Let us assume that we are given a function $f(z)$ solution of a minimal differential equation 
\begin{equation}\label{eq:1}
\sum_{j=0}^r P_j(z)f^{(j)}(z)=0, \qquad P_j(z)\in \Qbar(z) \,\mbox{ and }\, P_r(z)\equiv 1\, .
\end{equation}

We want to find a minimal relation between $1, f(z), f'(z), \ldots,$ over $\Qbar(z)$. Either \eqref{eq:1} is such a 
minimal relation, or there exists a non-trivial relation  
\begin{equation}\label{eq:2}
1+\sum_{j=0}^{s} Q_j(z)f^{(j)}(z)=0, \qquad Q_j(z)\in \Qbar (z).
\end{equation}
for some $s\le r$. In this case, we necessarily have $s=r-1$ by minimality of $r$. Indeed, if 
otherwise $s\le r-2$, we differentiate~\eqref{eq:2} and get a non-trivial relation 
\begin{equation}\label{eq:new4}
\sum_{j=0}^s \big(Q_j(z)f^{(j)}(z)\big) '=0 .
\end{equation}
which is of the form~\eqref{eq:1} but of order $s+1<r$, contradiction.

We now want to decide if there exists a relation with $s=r-1$ . We have
\begin{align*}
0&=\sum_{j=0}^{r-1} \big(Q_j(z)f^{(j)}(z)\big) '
=\sum_{j=0}^{r-1} \big(Q_j'(z)f^{(j)}(z)+Q_j(z)f^{(j+1)}(z)\big)
\\
&=Q_{r-1}(z)f^{(r)}(z)+\sum_{j=0}^{r-1} \big(Q_j'(z)f^{(j)}(z)+Q_{j-1}(z)f^{(j)}(z)\big)  \qquad (Q_{-1}(z)=0)
\\
&=\sum_{j=0}^{r-1}\big(-P_j(z)Q_{r-1}(z) + Q_j'(z)+Q_{j-1}(z)\big) f^{(j)}(z).
\end{align*}

By minimality of $r$, we must have $-P_jQ_{r-1}+ Q_j'+Q_{j-1} =0$ for all 
$j$, i.e 
$$
Q_j'=P_jQ_{r-1}-Q_{j-1}
$$ 
for 
$j=0, \ldots, r-1$, with $Q_{-1}=0$. We then obtain a differential system:
\begin{equation}\label{eq:3}
\left(\begin{matrix}
Q_0
\\
Q_1
\\
Q_2
\\
\vdots
\\
Q_{r-1}
\end{matrix} 
\right)' = 
\left(\begin{matrix}
0 &0 &\ldots &0&P_0
\\
-1 &0 &\ldots &0&P_1
\\
0 &-1 &\ldots &0&P_2
\\
\vdots&\vdots & \vdots &\vdots&\vdots 
\\
0 &0 &\ldots &-1&P_{r-1}
\end{matrix} 
\right)
\left(\begin{matrix}
Q_0
\\
Q_1
\\
Q_2
\\
\vdots
\\
Q_{r-1}
\end{matrix} 
\right)
\end{equation}
Any $(Q_0, \ldots, Q_{r-1})\in \Qbar(z)^{r}$ such that \eqref{eq:2} holds with $s=r-1$, is a solution of \eqref{eq:3}. Conversely, if we are given any   
explicit non-zero solution $(Q_0, \ldots, Q_{r-1})\in \Qbar(z)^{r}$ of the system \eqref{eq:3}, then by construction of this system, 
we obtain an explicit  relation 
of the form \eqref{eq:new4} with $s=r-1$. Hence after integration
\begin{equation}\label{eq:new5}
\sum_{j=0}^{r-1} Q_j(z)f^{(j)}(z)=c
\end{equation}
for some constant $c$ which we now have to compute. Since the Taylor coefficients of $f(z)$ and the rational functions $Q_j(z)$ are explicitly known, we can compute the constant term of the Laurent expansion at $z=0$ of the left-hand side of \eqref{eq:new5}. This determines an algebraic number equal to $c$.

It thus remains to decide whether the system  \eqref{eq:3} 
has a non-zero rational solution and, if so, to compute it.  There exist algorithms to perform this task, for instance Barkatou's algorithm~\cite{barkatou} which works over any ground field of characteristic $0$.

\section{Step 4: Applying the Andr\'e-Beukers theory} \label{sec:step4}

In this section, we complete the proof of Theorem~\ref{thmmain}. Given an $E$-function $f(z)$ together with its minimal inhomogeneous differential equation of order $s$, 
we describe an algorithm to find the  set of algebraic points where $f$ takes algebraic 
values. 

First, if $s=0$, then $f(z)\in \Qbar[z]$ and $f(z)$ takes algebraic values at all algebraic points and our algorithm stops here.

We now assume that $s\ge 1$, so that $f(z)$ is transcendental over $\mathbb C(z)$. 
From the minimal inhomogeneous differential equation of $f(z)$ of order $s\in \{r,r-1\}$, one can find some explicit polynomials, 
$u_{0}(z), u_1(z), \ldots, u_{s+1}(z)$, with $u_0\not\equiv 0 $, such that 
\begin{equation}\label{eq:new}
\left(\begin{array}{c}0 \\ f'(z) \\ \vdots \\ f^{(s)}(z) \end{array} \right)
= \left(\begin{array}{cccccc}
 0 & 0 &0& 0& \cdots & 0  
\\
0&0&1&0 &\cdots &0
\\
0&0&0&1 &\cdots &0
\\  
\frac{u_{1}(z)}{u_0(z)} & \frac{u_{2}(z)}{u_0(z)}  &  \cdots &\cdots & \cdots & \frac{u_{s+1}(z)}{u_0(z)}
\end{array} \right)
\left(\begin{array}{c}1 \\ f(z) \\ \vdots \\ f^{(s-1)}(z) \end{array} \right) \,
\end{equation}
with $1,f(z),f'(z),\ldots,f^{(s-1)}(z)$ linearly independent over $\overline{\mathbb Q}(z)$. For later use, let $B(z)$ denote the square matrix in~\eqref{eq:new}.

 Corollary 1.4 of~\cite{beukers} implies that $1,f(\alpha),\ldots,f^{(s-1)}(\alpha)$ 
are linearly independent over $\overline{\mathbb Q}$ for any non-zero algebraic numbers which 
is not a root of $u_0(z)$, because such a point is regular for the system.   
In particular, $f(\alpha)$ is transcendental for such an algebraic number~$\alpha$. 

It thus remains to decide which roots $\alpha$ of $u_0(z)$ are such that $f(\alpha)\in \Qbar$. Note that 0 is not necessarily a root of $u_0$ (as $e^z$ shows) and we have to take  it into account in~\eqref{eq:new8} below.
By Theorem 1.5 of \cite{beukers}, there exists an $(s+1)\times (s+1)$ invertible matrix $\mathcal{M}(z)$ with entries in 
$\overline{\mathbb Q}[z]$ such that  
\begin{equation}\label{eq: systeme}
\f(z):=\left( \begin{array}{ c }
     1 \\
     f(z)\\
     \vdots \\
     f^{(s-1)}(z)
  \end{array} \right) = \mathcal{M}(z)\left( \begin{array}{ c }
     e_0(z) \\
     e_1(z) \\
     \vdots \\
     e_s(z)
  \end{array} \right)  \, ,
\end{equation}
where $e_0(z),\ldots,e_s(z)$ is a vector of $E$-functions solution of a differential system with coefficients in 
$\overline{\mathbb Q}[z,1/z]$. Since the $e_j(z)$'s are $\Qbar(z)$-linearly independent,  Corollary 1.4 of~\cite{beukers} implies again that $e_0(\alpha),\ldots,e_s(\alpha)$ are 
$\Qbar$-linearly independent for any $\alpha \in \Qbar^*$. 
Thus if $f(\alpha) \in \overline{\mathbb Q}$ then there exists $\lambd=(\beta,1,0,\ldots,0)
\in \overline{\mathbb Q}^{s+1}$ such that the scalar product 
$$
0=\lambd \cdot  \f(\alpha) = \lambd \mathcal{M}(\alpha) \left( \begin{array}{ c }
     e_0(\alpha) \\
     \vdots \\
     e_s(\alpha)
  \end{array} \right)  
$$
and thus $\lambd$ belongs to ${\rm coker} (\mathcal{M}(\alpha))$. The converse is also true and we have thus proved: 
\begin{equation}\label{eq:new8}
\big\{\alpha \in \overline{\mathbb Q} : f(\alpha)\in \overline{\mathbb Q} \big\} = \big\{\alpha\in  \overline{\mathbb Q} :  u_0(\alpha)=0 \; \textup{and} \; \exists(\beta,1,0,\ldots,0) \in  {\rm coker} (\mathcal{M}(\alpha)) \big\}\cup \big\{0\big\}.
\end{equation}
Provided the matrix $\mathcal{M}(z)$ is explicitly known, any algebraic number $\alpha$ in the set on the right-hand side of \eqref{eq:new8}, as well as the corresponding value $f(\alpha)=\beta$, is determined.

In the final part of~\cite{beukers}, Beukers constructs a suitable matrix $\mathcal{M}(z)$ by an effective ``non-zero singularity removal'' procedure, which is done one singularity after the other. Starting from a singularity $\alpha\neq 0$ of $B(z)$ of order $k$ say, 
a sequence of matrices $B_{j,\alpha}(z)$ is  explicitly computed (for $j=1$, then $j=2$, etc) each with a singularity at $\alpha$ of order $k-j$; the matrix $B_{k,\alpha}(z)$ has no singularity at $\alpha$ and we repeat the same process with its other singularities if there are any. We end up with a matrix $\mathcal{M}(z)$.~\footnote{At each step, there is a degree of freedom in the construction of a certain matrix with algebraic coefficients -- called $M$ by Beukers --, and the resulting matrix $\mathcal{M}(z)$ is not necessarily unique.} The termination of the procedure is justified at a meta level by an argument from differential Galois theory involving a fundamental matrix solution of the system \eqref{eq:new} (and not only the vector solution $(1,f(z), \ldots, f^{(s)}(z))^T$) but no explicit computation of this matrix is required to run the algorithm. We also observe that it is not always necessary to compute $\mathcal{M}(z)$ for our Diophantine purposes. For instance, if  $B(z)$ has only one singularity $\alpha\neq 0$ (of order $k$), we have $B_{k,\alpha}(z)=\mathcal{M}(z)$ but the construction of one of the matrices $B_{j,\alpha}(z)$ for some $j<k$ may already determine whether $f(\alpha)\in \Qbar$ or not; this is the case for the second example in Section~\ref{sec:ex}.

\section{Some remarks}

Our input is an explicit differential operator $L=\sum_{j=0}^\delta a_j(z)(\frac{d}{dz})^j$ in $\K[z,\frac{d}{dz}]$ and an oracle provides us with an $E$-function $f(z)\in \mathbb K[[z]]$ such that $Lf(z)=0$.  The number fields $\K$ is explicit in the sense that $\K=\mathbb Q[\beta]$, for some primitive element $\beta$ which is determined, with $p$ as minimal polynomial. This enables us  to make all the computations in Steps 2 and 3 in $\mathbb Q[X]/(p(X))$, without roundings. Similarly, in Step 4, we have to work over a finite extension $\mathbb L$ of $\K$ but again we can work in $\mathbb L=\mathbb Q[\delta]$ for some determined primitive element $\delta$.

In Step 2, we compute a minimal homogeneous differential equation $L_{\min} \in \K[z,\frac{d}{dz}]$ satisfied by $f(z)$. The degree and height of its polynomials coefficients can be a priori effectively bounded in terms of $L$, $\K$ and a certain integer $N$ (equal to the number of needed Taylor coefficients of $f(z)$) which itself depends on the degree and height of the $a_j$'s.

In Step 3, we compute a minimal inhomogeneous differential equation  satisfied by $f(z)$, with coefficients in $\K[z]$. Again, the degree and height  of its polynomial coefficients can be a priori effectively bounded in terms of $L_{\min}$ and $\K$. In particular, any non-zero 
algebraic number $\alpha$ such that $f(\alpha)\in \Qbar$ is a root of the leading polynomial coefficient $u_0$. This provides a priori bounds for he  degree and height of these (potentially) exceptional $\alpha$'s in terms of $L$ and $\K$.

In Step 4, we determine which root $\alpha$ of $u_0$ is indeed such that $f(\alpha)\in \Qbar$. A study of Beukers's procedure shows that the degree and height of $f(\alpha)\in \K(\alpha)$ can be effectively a priori bounded in terms of $u_0$ and $\alpha$. 

As already mentioned, we did not try compute these bounds explicitly because they depend on various huge explicit bounds in the literature which are already far from optimal, and thus more of theoretical than of practical interest.

\section{Examples}\label{sec:ex}

In this section, we present three examples of $E$-functions for which we compute the set of exceptional algebraic values. 
Some computations were done with the help of Maple~18.

\medskip

$\bullet$ Let us first consider the transcendental $E$-function
$$
f(z)= \sum_{n=0}^\infty \frac{1}{n!}\bigg(\sum_{k=0}^n\binom{n}{k}^2\binom{n+k}{n}\bigg) z^n.
$$
We shall prove that $f(\alpha)\notin \Qbar$ for any $\alpha\in \Qbar^*$.

The function $f(z)$ is solution of the following homogeneous differential equation, which is minimal for it because it is irreducible in $\Qbar(z)[\frac{d}{dz}]$:
\begin{equation}\label{eq:new2}
y'''(z)+ \frac{3-11z}{z} y''(z)+\frac{1-22z+z^2}{z^2} y'(z)+\frac{3-z}{z^2} y(z)=0.
\end{equation}
The minimal inhomogeneous differential equation satisfied by $f(z)$ is either~\eqref{eq:new2} or is of order 2. The latter possibility happens  if and only if  the differential system
$$
Y'(z)=
\left(\begin{array}{ccc}
 0 & 0 &  \frac{3-z}{z^2}
\\
-1&0& \frac{1-22z+z^2}{z^2}
\\
0&-1& \frac{3-11z}{z}
\end{array} \right) Y(z)
$$
has a non-zero solution $Y(z)\in \Qbar(z)^3$. As there is no such rational solution,~\eqref{eq:new2} is indeed the minimal inhomogeneous differential equation satisfied by $f(z)$. In other words, 
\begin{equation}\label{eq:new77}
\left(\begin{array}{c}0 \\ f'(z) \\ f''(z)\\f'''(z) \end{array} \right)
= \left(\begin{array}{cccc}
 0 & 0 & 0 &0
\\
0&0& 1&0
\\
0&0& 0&1
\\
0& \frac{3-z}{z^2}&\frac{1-22z+z^2}{z^2}&\frac{3-11z}{z}
\end{array} \right)
\left(\begin{array}{c}1 \\ f(z) \\ f'(z)\\f''(z) \end{array} \right).
\end{equation}
Since $0$ is the only singularity of the matrix in \eqref{eq:new77}, we deduce that $f(\alpha)\notin \Qbar$ for any $\alpha\in \Qbar^*$. Moreover, Beukers' matrix $\mathcal{M}(z)$ and basis $(e_0,e_1,e_2,e_3)$ can simply be taken as the identity matrix and $(1,f,f',f'')$ respectively, because there is no non-zero singularity to remove in \eqref{eq:new77}.

\medskip

$\bullet$ Let us now consider the transcendental $E$-function
$$
f(z)=\sum_{n=0}^\infty \frac{n^2\binom{2n}{n}}{(n+1)^2}  \frac{(z/2)^{n+1}}{n!}. 
$$
We shall prove that $f(\alpha)\notin \Qbar$ for any $\alpha\in \Qbar\setminus \{0,1\}$, and that $f(1)=\frac12$.

The function $f(z)$ is solution of the following homogeneous differential equation, which is minimal for it: 
\begin{equation}\label{eq:new3}
y'''(z)+ \frac{1-2z-2z^2}{z(1+z)} y''(z)-\frac{1+4z+z^2}{z^2(1+z)} y'(z)=0.
\end{equation}
The minimal inhomogeneous differential equation satisfied by $f(z)$ is either~\eqref{eq:new3} or is of order 2. The latter possibility happens if and only if  the differential system
$$
Y'(z)=
\left(\begin{array}{ccc}
 0 & 0 & 0
\\
-1&0& -\frac{1+4z+z^2}{z^2(1+z)}
\\
0&-1& \frac{1-2z-2z^2}{z(1+z)}
\end{array} \right) Y(z)
$$
has a non-zero solution $Y(z)\in \Qbar(z)^3$. We find that 
$$
Y(z)= \left( 1, \frac{(1-z)(1-z+2z^2)}{z(1+z)}, \frac{(1-z)^2}{1+z} \right)^T
$$
is indeed such a solution. Hence the minimal inhomogeneous differential equation satisfied by $f(z)$ is 
\begin{equation*}
y(z)+\frac{(1-z)(1-z+2z^2)}{z(1+z)}y'(z)+\frac{(1-z)^2}{1+z} y''(z) =c
\end{equation*}
for some constant $c$. Now, the constant term of the Laurent expansion at $z=0$ of 
$$
f(z)+\frac{(1-z)(1-z+2z^2)}{z(1+z)}f'(z)+\frac{(1-z)^2}{1+z} f''(z)
$$
is readily computed and seen to be equal to $\frac12$, which is our constant $c$. 
Therefore,  
\begin{equation}\label{eq:new6}
z(1-z)^2f''(z)=(z-1)(1-z+2z^2)f'(z)-z(1+z)f(z)+\frac12 z(1+z)
\end{equation}
or equivalently
\begin{equation}\label{eq:new7}
\left(\begin{array}{c}0 \\ f'(z) \\ f''(z) \end{array} \right)
= \left(\begin{array}{ccc}
 0 & 0 & 0 
\\
0&0& 1
\\
\frac{1+z}{2(z-1)^2}& -\frac{1+z}{2(z-1)^2}&\frac{1-z+2z^2}{z(z-1)}
\end{array} \right)
\left(\begin{array}{c}1 \\ f(z) \\ f'(z) \end{array} \right).
\end{equation}
At this stage, we are ensured that $f(\alpha)\notin \Qbar$ for any $\alpha \in \Qbar \setminus \{0,1\}$. To determine the arithmetic nature of $f(1)$, we start Beukers' removal process of the singularity 1 in the matrix in \eqref{eq:new7}. To do that, the first step is to multiply both sides of \eqref{eq:new7} by $(z-1)^2$ and then put $z=1$ to get a non-trivial $\Qbar$-linear relation between $1, f(1)$ and $f'(1)$. This amounts to put 
$z=1$ in \eqref{eq:new6} and we observe that this gives $f(1)=\frac12$. Hence, our problem is already solved and in fact there 
is no need to compute Beukers' matrix $\mathcal{M}(z)$. 

Finally, the Andr\'e-Beukers theory ensures that 
$f(z)=\frac{1}{2}+(z-1)g(z)$ for some $E$-function $g(z)$. It is readily checked that 
$g(z)=2\sum_{n=0}^\infty \binom{2n}{n}\frac{(z/2)^n}{n!}$. Moreover, by the same methods presented here, 
it can be proved that $g(\alpha)\notin \Qbar$ for any $\alpha \in \Qbar^*$.

\medskip

$\bullet$ Finally,  we present a class of examples showing that the roots of $u_0$ are not always exceptional values. Given two distinct integers $a,b\ge 1$, we consider the $E$-function $f(z)=z^a e^{az}+z^b e^{bz}$. The minimal differential equation satisfied by $f$ is 
\begin{equation}\label{eqdiff31}
f''(z)+\frac{1-(a+b)(1+z)^2}{z(1+z)}f'(z)+\frac{ab(1+z)^2}{z^2}f(z)=0.
\end{equation}
The latter is  easily seen to be the minimal inhomogeneous differential equation satisfied by $f$, because $a$ and $b$ are distinct. Thus $u_0(z)=z^2(1+z)$. Hence we are ensured that  $f(\alpha)\notin \Qbar$ for any $\alpha \in \Qbar \setminus \{0,-1\}$.
 However, $f(-1)=(-1)^ae^{-a}+(-1)^be^{-b}\notin \Qbar$ by the Lindemann-Weierstrass Theorem and thus there is no exceptional $\alpha\neq 0$ for $f$. Note that $-1$ is in fact exceptional for $f'(z)$ because $f'(z)=(1+z)(az^{a-1}e^{az} + bz^{b-1}e^{bz})$, so that $f'(-1)=0$; this  can be seen directly from the differential equation \eqref{eqdiff31} as well.

\end{document}